\newcommand{\nats}{{\mathbb N}}
\newcommand{\card}{{\rm card}}
\def\A{\mathbb{A}}
\def\T{\mathbb{T}}
\newtheorem{thm}{Theorem}
\newtheorem{cor}{Corollary}
\newtheorem{defn}{Definition}
\newtheorem{lemma}{Lemma}
\newtheorem{proposition}[thm]{Proposition}
\newtheorem{question}{Question}
\newtheorem{example}{Example}
\theoremstyle{definition}
\begin{document}

\begin{frontmatter}

\title{On some variations of coloring problems of infinite words}

\author[label1]{Aldo de Luca}
  \ead{aldo.deluca@unina.it}

   \author[label2,label3]{Luca Q. Zamboni}
  \ead{lupastis@gmail.com}

\address[label1]{Dipartimento di Matematica e Applicazioni,
Universit\`a di Napoli Federico II, Italy}
\address[label3]{FUNDIM, University of Turku, Finland}
\address[label2]{Universit\'e de Lyon,
Universit\'e Lyon 1, CNRS UMR 5208,
Institut Camille Jordan,
43 boulevard du 11 novembre 1918,
F69622 Villeurbanne Cedex, France}

\begin{abstract}
Given a finite coloring (or finite partition) of the free semigroup $\A^+$ over a set $\A$, we consider   various types of monochromatic factorizations  of  right sided infinite words $x\in \A^\omega$.  Some stronger versions of the usual notion of monochromatic factorization are introduced.  A factorization is called sequentially monochromatic when  concatenations of consecutive blocks are monochromatic. A sequentially  monochromatic factorization is called ultra monochromatic if  any concatenation of arbitrary permuted blocks  of  the factorization has the same color of the single blocks. 
 We establish links, and in some cases equivalences, between the existence of these factorizations and fundamental results in Ramsey theory  including the infinite Ramsey theorem, Hindman's finite sums theorem,  partition regularity of IP sets and the Milliken-Taylor theorem. We prove that for each finite set $\A$ and each finite coloring $\varphi: \A^+\rightarrow C,$ for almost all words $x\in \A^\omega,$ there exists $y$ in the subshift generated by $x$ admitting a  $\varphi$-ultra monochromatic factorization, where ``almost all" refers to the Bernoulli measure on $\A^\omega.$ 
\end{abstract}

\begin{keyword}Ramsey theory,  combinatorics on words.
\MSC[2010] 5D10, 68R15
\end{keyword}
\journal{}

\end{frontmatter}

\section{Introduction and preliminaries}

Let   $\A$ be a  non-empty set, or {\em alphabet},  and  
 $\A^+$ denote the {\em free semigroup}  over $\A$, i.e.,
the set of all finite words $x=x_0x_1\cdots x_n$ with $x_i\in \A$, $0\leq i \leq n$. 
 Adding  to $\A^+$ an identity element $\varepsilon$, usually called {\em empty word}, one obtains the free monoid $\A^*$.
Let  $\A^\omega$  denote the set of all
right sided infinite words $x=x_0x_1\cdots $ with $x_i\in \A$, $i\geq 0$. For $x\in \A^\omega$ we let $\makebox{Fact}(x)=\{x_ix_{i+1}\cdots x_{i+j} \mid  i, j\geq 0\}$ denote the set of (non-empty) factors of $x$. 
A word $x \in \A^{\omega}$ is said to be (purely) {\em periodic} if  $x=u^\omega$, $u\in \A^+$, and  {\em ultimately periodic} if some suffix of $x$ is periodic. A word $x$ is called {\em aperiodic} if it is not ultimately periodic.

 Let $\varphi: \A^+ \rightarrow C$ be any mapping of $\A^+$ into a finite non-empty set $C$. We call the elements of $C$ {\em colors}
and $\varphi$ a {\em finite coloring}  of $\A^+$. 
We  consider three general notions of monochromatic factorization of $x$ relative to the coloring $\varphi$.

\begin{defn}\label{def:uno} Let  $\varphi: \A^+\rightarrow C$  be a finite coloring of $\A^+$  and  $x\in \A^\omega.$  A factorization $x=V_0V_1V_2\cdots$ with each $V_i\in \A^+$ is  called 
 
 \begin{itemize}
 \item  $\varphi$-{\it monochromatic} if $\exists c\in C$ such that $\varphi(V_i)=c$ \  for all $i\geq 0.$
 
 \item  $\varphi$-{\it sequentially monochromatic} if  $\exists c\in C$ such that $\varphi(V_iV_{i+1}\cdots  V_{i+j})=c$  \  for all $ i,j \geq 0.$ 
 
 \item $\varphi$-{\it ultra monochromatic}  if $\exists c\in C$ such that  for all $k\geq 1,$ and
all $0\leq n_1<n_2 <\cdots <n_k,$ and all permutations $\sigma $ of $\{1,2,\cdots ,k\}$ we have 
$\varphi(V_{n_{\sigma(1)}}V_{n_{\sigma(2)}}\cdots V_{n_{\sigma(k)}})=c.$ 
 \end{itemize}

\end{defn}
Clearly any $\varphi$-ultra monochromatic factorization is  $\varphi$-sequentially monochromatic and any   $\varphi$-sequentially monochromatic 
factorization is  $\varphi$-monochromatic. We begin with some examples.

Let  $\varphi: \A^+\rightarrow C$ be any finite coloring, and let  $x=u^\omega$, $u\in \A^+$,  be a periodic infinite word. Then the factorization $x=u\cdot u\cdot u\cdots$ is $\varphi$-monochromatic.  In general this factorization need not be $\varphi$-sequentially monochromatic. 

Let $\T=t_0t_1t_2\cdots \in \{0,1\}^\omega$ denote the {\it Thue-Morse infinite word}, where $t_n$ is defined as the sum modulo $2$ of the digits  in the binary expansion of
$n.$ \[\T=011010011001011010010\cdots \]The origins of $\T$ go back to the beginning of the last century with the works of  A. Thue \cite{Th1, Th2} in which he proves amongst other things that $\T$ is {\it overlap-free} i.e., contains no word of the form $uuu'$ where $u'$ is a non-empty prefix of $u.$ 

Consider  $\varphi: \{0,1\}^+\rightarrow \{0,1\}$  defined by $\varphi(u)=0$ if $u$ is a prefix of $\T$ and $\varphi(u)=1$ otherwise. It is easy to see that $\T$ may be factored uniquely as $\T=V_0V_1V_2\cdots$ where each $V_i\in\{0,01,011\}.$ Since each $V_i$ is a prefix of $\T,$ it follows that this factorization is $\varphi$-monochromatic. Since $V_1V_2=010$ is not a prefix of $\T,$ this factorization of $\T$ is not $\varphi$-sequentially monochromatic.  Next consider the coloring $\varphi ': \{0,1\}^+\rightarrow \{0,1,2\}$ defined by $\varphi '(u)=0$ if $u$ is a prefix of $\T$ ending with $0,$ $\varphi '(u)=1$ if $u$ is a prefix of $\T$ ending with $1,$ and $\varphi '(u)=2$ otherwise. We claim that $\T$ does not admit a $\varphi '$-monochromatic factorization. In fact, suppose to the contrary that $\T=V_0V_1V_2\cdots$ is a  $\varphi '$-monochromatic factorization. Since $V_0$ is a prefix of $\T$, it follows that there exists $a\in \{0,1\}$ such that each $V_i$ is a
  prefix of $\T$ terminating with $a.$ Pick $i\geq 1$ such that $|V_i|\leq |V_{i+1}|.$ Then  $aV_iV_i \in \makebox{Fact}(\T).$ Writing
$V_i=ua,$ (with $u$ empty or in $\{0,1\}^+),$ we see $aV_iV_i=auaua $ is an overlap, contradicting that $\T$ is overlap-free.

The following question\footnote{The original formulation of the question was stated in terms of finite colorings of $\mbox{Fact}(x)$ instead of $\A^+.$} was independently posed by T. Brown in \cite{BTC} and by the second author in \cite{LQZ}:

\begin{question}\label{conj} Let $x\in \A^\omega$ be non-periodic. Does there exist a finite coloring $\varphi: \A^+\rightarrow C$ relative to which $x$ does not admit a $\varphi$-monochromatic factorization?
\end{question}

Various partial results in support of an affirmative answer to this question were obtained in \cite{DZ, DPZ, ST}. In particular, it is shown that Question~\ref{conj} admits an affirmative answer for all non-uniformly recurrent words and various classes of uniformly recurrent words including Sturmian words. What is immediate to see  is that if $x\in \A^\omega$ is not periodic, then there exists a finite coloring $\varphi: \A^+\rightarrow \{0,1\}$ relative to which no factorization of $x$ is $\varphi$-sequentially monochromatic. In fact, it suffices to define $\varphi(u)=0$ if $u$ is a prefix of $x$ and $\varphi(u)=1$ otherwise. We claim that $x$ does not admit a $\varphi$-sequentially monochromatic factorization. In fact, suppose to the contrary that  $x=V_0V_1V_2\cdots$ is  a $\varphi$-sequentially monochromatic factorization. Then since $V_0$ is a prefix of $x,$ it follows that $\varphi(V_0)=0$ and hence $\varphi(V_iV_{i+1}\cdots V_{i+j})=0$ for each $i,j\geq 0.$ In particular taking $i=1$ we deduce that $V_1V_2\cdots V_j$ is a prefix of $x$ for each $j\geq 1.$ It follows that $x=V_1V_2V_3\cdots,$ and hence $x=V_0x,$ whence $x$ is periodic, a contradiction.

 In the next sections, we establish links, and in some cases equivalences, between the existence of the factorizations given in Definition \ref{def:uno} and fundamental results in Ramsey theory  including the infinite Ramsey theorem, Hindman's finite sums theorem,  partition regularity of IP sets, and the Milliken-Taylor theorem. One of the main results is  that for each finite set $\A$ and each finite coloring $\varphi: \A^+\rightarrow C,$ for almost all words $x\in \A^\omega,$ there exists $y$ in the subshift generated by $x$ admitting a  $\varphi$-ultra monochromatic factorization, where ``almost all" refers to the Bernoulli measure on $\A^\omega.$ 
 
 We conclude this section by introducing some notations and definitions which are relevant to subsequent sections.
 Given a set $S$ and a positive integer $k,$  let $\Sigma_k(S)$ denote the set of  $k$-element subsets of $S$  and $\makebox{Fin}(S)$  the set of all finite subsets of $S.$ 
We let $\nats=\{0,1,2,\ldots \}$ denote the set of natural numbers and $\nats^+=\nats \setminus \{0\}$ the set of positive integers. For $F,G\in \makebox{Fin}(\nats),$ we write $F<G$ if $\max(F)<\min(G).$

Let $x= x_1x_2 \cdots x_n$, $x_i\in \A$, $1\leq i\leq n$, be a word.  The quantity $n$ is called the {\em length} of $x$ and is denoted $|x|.$  The length of $\varepsilon$ is $0$.
For each word $x$ and $a\in \A$, we let $|x|_a$  denote the number of occurrences
of $a$ in $x$.
 The
\emph{reversal} of $x$ is the word $x^{\sim}= x_n\cdots x_1$.
A  factor $y$ of a finite or infinite word  $x$  is called {\em right special} (resp., {\em left special}) if there exist two different elements  $a$ and $b$ of $\A$ such that $ya$ and $yb$ (resp., $ay$ and $by$) are factors of $x$.

Let  $x\in \A^{\omega}$. The {\em factor complexity} of $x$ is the map $\lambda_x: \makebox{Fact}(x) \rightarrow \nats$ defined as follows: for any $n\geq 0$,
 $\lambda_x(n)$ counts the number of distinct factors of $x$ of length $n$.
An {\em occurrence} of $u \in \makebox{Fact} (x)$ in $x$ is any integer $n\geq 0$ such that  $x_nx_{n+1}\cdots x_{n+|u|-1}=u$. A factor $u$ of  $x \in \A^\omega$ is called {\em recurrent} if $u$ occurs infinitely many times in $x$  and {\em uniformly recurrent} if 
there exists an integer $k$ such that in any factor of $x$ of length $k$ there is at least one occurrence of $u$. An infinite word $x$ is called {\it recurrent} (resp., {\it uniformly recurrent}) if each of its factors is recurrent (resp., uniformly recurrent). As is well known \cite{F} for any infinite word $x$ there exists a uniformly recurrent word $y$ such that  $\makebox{Fact}(y) \subseteq \makebox{Fact}(x)$.

We endow $\A^\omega$ with the topology generated by the  metric
\[d(x, y)=\frac 1{2^n}\,\,\mbox{where} \,\, n=\min\{k \mid x_k\neq y_k\}\] 
whenever $x=(x_n)_{n\in \nats}$ and $y=(y_n)_{n\in \nats}$ are two
elements of $\A^\omega.$ 
The resulting topology is genera\-ted by the collection of {\it cylinders} $ [a_0,\ldots ,a_n]$, where
for each $n\geq 0$ and $a_i\in \A$, $0\leq i \leq n$,
\[
[a_0,\ldots ,a_n]=\{x\in \A^\omega \,|\, x_i=a_i\, \ \mbox{for}\, \ 0\leq i\leq n\}.\]
It can also be described as being the product topology on $\A^\omega$ with the discrete topology on  $\A$. In particular this topology is compact. The {\em Bernoulli measure} on $\A^{\omega}$, where $\A$ is finite,  is  defined as the unique measure $\mu$ on the $\sigma$-algebra of $A^\omega$ such that $\mu([a_0,\ldots ,a_n])= d^{-(n+1)}$  with $d=\card(\A)$.

Let $T:\A^\omega
\rightarrow \A^\omega$ denote the {\it shift}
transformation defined by $T: (x_n)_{n\in \nats}\mapsto
(x_{n+1})_{n\in \nats}.$ The {\em shift orbit} of $x$ is the set $\text{orb}(x)=\{T^k(x) \mid k\geq 0\}$,  i.e., the set of all suffixes of $x$. By a {\it subshift} on ${\A}$ we
mean a pair $(X,T)$ where $X$ is a closed and $T$-invariant subset
of $\A^\omega.$ A subshift $(X,T)$ is said to be {\it
minimal} whenever $X$ and the empty set are the only $T$-invariant
closed subsets of $X.$ With each $x \in \A^\omega$ is
associated the subshift $(X,T)$ where $X$ is the shift orbit
closure  of $x.$ This subshift, denoted by $\Omega(x)$, is usually called the {\em subshift generated by $x$}. As is well known (see, for instance, \cite[Theorem 10.8.9]{AS}) one has that
$$\Omega(x)=\{y \in \A^{\omega} \mid \makebox{Fact}(y)  \subseteq \makebox{Fact}(x) \}.$$
 If $x$ is uniformly recurrent, then $\Omega(x)$ is minimal, so that any two words $y$ and
$z$ in $\Omega(x) $ have exactly the same set of factors.

A word $x\in \{0,1\}^{\omega}$ is called {\it Sturmian} (cf., \cite[Chap. 2]{LO2})  if  it is aperiodic and {\em balanced}, i.e., for all factors $u$ and $v$ of $x$ such that $|u|=|v|$ one has
$$ | |u|_a-|v|_a| \leq 1, \ a\in \{0,1\}.$$
It follows that each Sturmian word contains exactly one of the two factors $00$ and $11$. Alternatively, a binary infinite word $x$ is Sturmian if  $x$ has a unique left (or equivalently right) special factor  of length $n$  for each  integer $n\geq 0$. This is equivalent to saying  that  for each $n\geq 0$ the number 
 of distinct factors of $x$ of length $n$ is exactly equal to $ n+1$. As a consequence one derives that  a Sturmian word $x$ is {\em closed under reversal}, i.e., if $u$ is a factor of $x$, then so is its reversal $u^{\sim}$ (see, for instance, \cite[Proposition 2.1.19]{LO2}).  The most famous Sturmian word is the Fibonacci word $f= 0100101001001010010\cdots$ which is the fixed point of the morphism $F$ defined by
 $F: 0 \mapsto 01, 1\mapsto 0$.
 
 For  $a\in \{0, 1\}$, we consider the injective endomorphism  $L_a$ of $\{0, 1\}^*$   defined by $  L_a : a\mapsto a,  b\mapsto ab .$
We recall \cite[Proposition 2.3.1]{LO2} that  the image $L_a(y)$ of any  Sturmian word $y$ is a Sturmian word. Moreover, for any word $y\in \{0, 1\}^{\omega}$ if $L_a(y)$ is a Sturmian word, then $y$ is also Sturmian \cite[Proposition 2.3.2]{LO2}.

An infinite word $x\in \A^{\omega}$ is  $r$-power free, $r>1$, if for each $u\in \makebox{Fact}(x)$ one has $u^r\not\in \makebox{Fact}(x)$.
For instance, the word $\T$ is $3$-power free and $f$ is $4$-power free (see, for instance, \cite{CD}).

\section{Main results}
 Given any finite coloring $\varphi$ of $\A^+$ and any infinite word $x\in \A^\omega,$ while it may happen as we have previously seen,  that $x$ does not admit a $\varphi$-monochromatic factorization, M. P. Sch\"utzenberger proved that for each finite coloring $\varphi$ of $\A^+$ and each infinite word $x\in \A^\omega$ there exists always a suffix of $x$ admitting a $\varphi$-monochromatic factorization (see \cite{schutz}).
The next theorem however provides a remarkable strengthening of Sch\"utzenberger's result.

\begin{thm}\label{ram} The following statements are equivalent:
\begin{enumerate}
\item For any  finite coloring $\varphi: \A^+\rightarrow C$ and any word $x\in \A^\omega, $  there exists a suffix $x'$ of $x$ which admits a  $\varphi$-sequentially monochromatic factorization. 

\item For each finite coloring $\varphi: \Sigma_2(\nats)\rightarrow C,$ there exist $c\in C$ and an infinite set
$\mathcal{N}\subseteq \nats$ such that $\Sigma_2(\mathcal{N})\subseteq \varphi^{-1}(c).$
\end{enumerate}
\end{thm}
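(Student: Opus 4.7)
The key observation is that statement (2) is precisely the infinite Ramsey theorem for unordered pairs. The theorem therefore asserts an equivalence between (1) and (this instance of) Ramsey's theorem. My plan is to prove each direction by a direct encoding between finite colorings of $\Sigma_2(\nats)$ and finite colorings of $\A^+$.

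For the direction (2) $\Rightarrow$ (1), given any finite coloring $\varphi:\A^+\rightarrow C$ and $x=x_0x_1x_2\cdots\in\A^\omega$, I would introduce the auxiliary pair-coloring
\[
\psi:\Sigma_2(\nats)\rightarrow C,\qquad \psi(\{i,j\})=\varphi(x_ix_{i+1}\cdots x_{j-1})\quad(i<j).
\]
Applying (2) produces a color $c$ and an infinite set $\mathcal{N}=\{n_0<n_1<n_2<\cdots\}$ with $\Sigma_2(\mathcal{N})\subseteq\psi^{-1}(c)$. Setting $V_i=x_{n_i}x_{n_i+1}\cdots x_{n_{i+1}-1}$, the suffix $x'=x_{n_0}x_{n_0+1}\cdots$ factors as $V_0V_1V_2\cdots$, and a one-line check gives $\varphi(V_iV_{i+1}\cdots V_{i+j})=\psi(\{n_i,n_{i+j+1}\})=c$ for all $i,j\geq 0$, so this factorization is $\varphi$-sequentially monochromatic.

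For the direction (1) $\Rightarrow$ (2), I will exploit that the paper's setup allows $\A$ to be any non-empty set. Given $\psi:\Sigma_2(\nats)\rightarrow C$, take $\A=\nats$ and the injective word $x=0\,1\,2\,3\cdots\in\A^\omega$; its factors are exactly the arithmetic runs $n(n+1)\cdots(n+k-1)$, each uniquely determined by its starting letter $n$ and length $k$, hence in natural bijection with the pair $\{n,n+k\}\in\Sigma_2(\nats)$. Fixing an auxiliary color $\ast\notin C$, I would define
\[
\varphi(u_1u_2\cdots u_k)=\begin{cases} \psi(\{u_1,\,u_1+k\}) & \text{if } u_{i+1}=u_i+1 \text{ for every } i, \\ \ast & \text{otherwise.} \end{cases}
\]
Applying (1) to $\varphi$ and $x$ produces a suffix $x'=m(m+1)(m+2)\cdots$ and a $\varphi$-sequentially monochromatic factorization $x'=V_0V_1V_2\cdots$ of some common color $c$. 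Because every $V_i$ is a factor of $x$, necessarily $c\in C$. Writing $m_0=m$ and $m_{i+1}=m_i+|V_i|$, the concatenation $V_iV_{i+1}\cdots V_{i+j}$ is again a factor of $x$, so $\psi(\{m_i,m_{i+j+1}\})=\varphi(V_iV_{i+1}\cdots V_{i+j})=c$ for all $i,j\geq 0$. Consequently the infinite set $\mathcal{N}=\{m_0,m_1,m_2,\ldots\}$ satisfies $\Sigma_2(\mathcal{N})\subseteq\psi^{-1}(c)$, which is exactly (2).

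The only non-trivial design choice, and the main obstacle in spirit, is the encoding used in (1) $\Rightarrow$ (2): one must arrange for pairs in $\Sigma_2(\nats)$ to be encoded by factors of $x$ in such a way that the coloring $\psi$ is recoverable from $\varphi$. The injective word over $\A=\nats$ makes this transparent. Should one insist on a finite alphabet, one could replace $x$ by a suitable aperiodic word in which each factor uniquely determines its starting position (for example a carefully chosen morphic fixed point), at the price of purely notational overhead and without any new conceptual ingredient.
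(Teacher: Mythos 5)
Your proposal is correct, and the direction $(2)\Rightarrow(1)$ is essentially identical to the paper's argument. The direction $(1)\Rightarrow(2)$, however, takes a genuinely different route. The paper fixes an arbitrary \emph{aperiodic binary} word $x$ and colors a factor $u$ by $\varphi(\{m(u),m(u)+|u|\})$, where $m(u)$ is the first occurrence of $u$ in $x$; the cost is that one must then show the sequentially monochromatic factorization can be arranged so that each concatenation $V_0\cdots V_s$ (and each $V_r\cdots V_s$) actually first occurs where it sits in the factorization, which requires a pigeonhole-plus-aperiodicity argument and a regrouping of blocks. You avoid all of this by taking $\A=\nats$ and $x=0\,1\,2\cdots$, so that every factor is an arithmetic run that determines its own position; the implication then becomes a one-line translation. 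Two caveats. First, your argument invokes statement (1) for an \emph{infinite} alphabet, whereas the paper's proof shows that (1) restricted to the binary alphabet already implies Ramsey's theorem; so the paper establishes a formally stronger equivalence, and your version is only as good as the reading of the quantifier on $\A$ in the theorem statement (the paper's conventions do allow arbitrary non-empty $\A$, so your reading is defensible, but you should flag that you are using the infinite-alphabet instance of (1)). Second, your closing remark is misleading: over a finite alphabet no aperiodic word can have every factor determine its starting position (some letter recurs, by pigeonhole), so passing to a finite alphabet is not ``purely notational overhead'' --- it is precisely the non-trivial first-occurrence argument that the paper supplies.
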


\begin{proof} We note that item (2) is a special case of the Infinite Ramsey's Theorem (see \cite{GRS}). We begin by showing that $(2)\Longrightarrow (1).$
 Let $\varphi: \A^+\rightarrow C$ be any finite coloring, and $x=x_0x_1x_2\cdots \in \A^\omega.$   Then $\varphi$ induces a finite coloring $\varphi ': \Sigma_2(\nats)\rightarrow C$ given by $\varphi '(\{m<n\})=\varphi(x_mx_{m+1}\cdots x_{n-1}).$ By $(2)$ there exists $c\in C$ and an infinite
subset $\mathcal{N}=\{n_0<n_1<n_2<\cdots\}$ of $\nats$ such that for all $m,n \in \mathcal{N}$ with $m<n$ we have $\varphi '(\{m<n\})=c.$ It follows that the factorization of the suffix $x'=x_{n_0}x_{n_0+1}x_{n_0+2}\cdots$ given by $x'=V_0V_1V_2\cdots$ where $|V_i|=n_{i+1}-n_i$ is $\varphi$-sequentially monochromatic. 

To see that $(1)\Longrightarrow (2),$ let  $\varphi: \Sigma_2(\nats)\rightarrow C$ be any finite coloring of 
$ \Sigma_2(\nats).$ Let $x\in \{0,1\}^\omega$ be any aperiodic word. Then $\varphi$ induces a finite coloring $\varphi ': \A^+\rightarrow C\cup\{*\},$ where $*$ denotes a symbol not in $C,$ defined as follows: For each $u\in \A^+, $ if $u\notin \mbox{Fact}(x),$ then set $\varphi'(u)=*.$ Otherwise, 
let $m(u)$ be the least natural number $m$ such that $u=x_mx_{m+1}\cdots x_{m+|u|-1},$ that is $m(u)$ is the first occurrence of $u$ in $x.$ Then we put \[\varphi '(u)=\varphi (\{m(u), m(u)+|u|\}).\] By $(1)$ there exists $n\geq 0$ such that the suffix $x'=x_nx_{n+1}x_{n+2}\cdots$ of $x$ admits a $\varphi '$-sequentially monochromatic factorization $x'=V_0V_1V_2\cdots.$ Put $c=\varphi '(V_0).$ Since $V_0\in \mbox{Fact}(x)$ we have $c\in C.$ Also, as $x$ is aperiodic, there exists $s\geq 0$ such that $n=m(V_0V_1\cdots V_s).$  Indeed, set $x= Ux'$ with $|U|=n$. The statement is clear if $n=0$. Otherwise, if for each $s$,  $m(V_0V_1\cdots V_s)<n$, then 
by the pigeonhole principle there existsÊ Ê$0\leq k <n$ such that $k= m(V_0V_1\cdots V_s)$ for infinitely many values of $s$. This implies that $xÕ=T^k(x)=T^n(x)$, whence $xÕ$ is purely periodic and hence $x$ is ultimately periodic, a contradiction.

 Similarly, for each $r\geq 1$ there exists $s\geq r$ such that
$m(V_r\cdots V_s)=n+ \sum_{i=0}^{r-1}|V_i|.$ 
Given any  increasing sequence $0=n_0<n_1<n_2<\cdots ,$ put $W_k=V_{n_k}V_{n_k+1}\cdots V_{n_{k+1}-1}.$ Then clearly the factorization $x'=W_0W_1W_2\cdots$ is also $\varphi '$-sequentially monochromatic.
Thus we can assume that $x'$ admits a $\varphi '$-sequentially monochromatic factorization $x'=V_0V_1V_2\cdots,$ 
such that $m(V_0)=n$ and $m(V_r)=n+\sum_{i=0}^{r-1}|V_i|$ for each $r\geq 1.$ Setting \[\mathcal{N}=\{n<n+|V_0|<\cdots <n+\sum_{i=0}^r|V_i|<\cdots\},\] we have
$\Sigma_2(\mathcal{N})\subseteq \varphi^{-1}(c)$ as required. 
\end{proof}

\begin{example}\label{ex:uno} {\em Let  $x= 010110111011110 \cdots$. Consider the finite coloring $\varphi: \{0,1\}^+\rightarrow \{0,1\}$ defined by $\varphi(u)=0$ if $u$ is a prefix of $x,$ and $\varphi(u)=1$ otherwise. Following \cite[Lemma 3.4]{DPZ}, since $0$ is not uniformly recurrent in $x,$ it follows that $x$ does not admit a prefixal factorization, i.e., $x$ is not a concatenation of its prefixes. It follows that $x$ does not admit a $\varphi$-monochromatic factorization. In contrast, by Theorem~\ref{ram}, for every finite coloring $\varphi:\{0,1\}^+\rightarrow C,$ there exists a suffix of $x$ which admits a $\varphi$-sequentially monochromatic factorization. Finally, let $\varphi: \{0,1\}^+\rightarrow \{0,1\}$ be defined by  $\varphi(u)=0$ if $u$ is a factor of $x,$ and $\varphi(u)=1$ otherwise. We claim that no suffix of $x$ admits a $\varphi$-ultra monochromatic factorization. In fact, suppose to the contrary that some suffix $x'$ of $x$ admits a $\varphi$-ultra monochromatic factorization $x'=V_0V_1V_2\cdots.$ By concatenating several of the $V_i$  together (as in the proof of Theorem \ref{ram}), we can assume that each $V_i$  contains at least two occurrences of $0.$ Then,  $V_jV_i$ is not a factor of $x$ whenever $i<j.$  Thus $0=\varphi(V_0)\neq \varphi(V_jV_i)=1.$ }
\end{example}

The following  proposition illustrates  how in some very special cases, Theorem~\ref{ram} can be used to construct an ultra monochromatic factorization:

\begin{proposition} Let $C$ be a finite semigroup and $\varphi: \A^+ \rightarrow C$ a morphism. Let $x\in \A^{\omega}.$  There exists a suffix $x'$ of $x$
which admits a $\varphi$-ultra monochromatic factorization.
\end{proposition}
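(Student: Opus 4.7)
The plan is to reduce directly to Theorem~\ref{ram} and exploit the fact that, when $\varphi$ is a morphism into a finite semigroup, sequential monochromaticity forces the common color to be an idempotent, which in turn upgrades the factorization to an ultra monochromatic one for free.

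First I would apply Theorem~\ref{ram}, part~(1), to the finite coloring $\varphi \colon \A^+ \to C$ and to $x$. This furnishes a suffix $x'$ of $x$ admitting a $\varphi$-sequentially monochromatic factorization $x' = V_0 V_1 V_2 \cdots$, with common color $c \in C$, so that
\[
\varphi(V_i V_{i+1} \cdots V_{i+j}) = c \quad \text{for all } i,j \geq 0.
\]

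Next I would observe that $c$ is necessarily idempotent in $C$. Indeed, taking $j=0$ gives $\varphi(V_i)=c$ for every $i \geq 0$; on the other hand, taking $i=0, j=1$ and using that $\varphi$ is a morphism yields
\[
c = \varphi(V_0 V_1) = \varphi(V_0)\varphi(V_1) = c \cdot c,
\]
so $c^2 = c$, and by induction $c^k = c$ for all $k \geq 1$.

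Finally, I would verify the ultra monochromaticity condition directly. For any $k \geq 1$, any $0 \leq n_1 < n_2 < \cdots < n_k$, and any permutation $\sigma$ of $\{1,2,\ldots,k\}$, the morphism property gives
\[
\varphi\bigl(V_{n_{\sigma(1)}} V_{n_{\sigma(2)}} \cdots V_{n_{\sigma(k)}}\bigr) = \varphi(V_{n_{\sigma(1)}}) \varphi(V_{n_{\sigma(2)}}) \cdots \varphi(V_{n_{\sigma(k)}}) = c^k = c.
\]
Hence $x' = V_0 V_1 V_2 \cdots$ is $\varphi$-ultra monochromatic, as required. There is no real obstacle here; the content of the proposition lies entirely in invoking Theorem~\ref{ram}, and the morphism hypothesis reduces the combinatorial permutation condition to the semigroup-theoretic fact that sequentially monochromatic color must be idempotent.
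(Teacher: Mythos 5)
Your proof is correct and follows essentially the same route as the paper's: invoke Theorem~\ref{ram} to obtain a $\varphi$-sequentially monochromatic factorization of a suffix, deduce from the morphism property that the common color $c$ satisfies $c^2=c$, and conclude that every product of blocks (in any order) is mapped to $c$. The only difference is that you spell out the final computation $c^k=c$ explicitly, which the paper leaves implicit.
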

\begin{proof} By Theorem \ref{ram} there exists a suffix $x'$ of $x$ which admits a $\varphi$-sequentially monochromatic factorization
$ x'= V_0V_1\cdots V_n\cdots. $
Thus there exists $c\in C$ such that for all $i\geq 0$, one has  $\varphi(V_i\cdots V_{i+j})= c$, for all $i,j \geq 0$.
This implies $\varphi(V_0V_1)=c= \varphi(V_0)\varphi(V_1)=c^2$. Therefore, $c$ is an idempotent of the semigroup $C$. Thus $\varphi(u)=c$ for any $u\in \{V_i\,|\,i\geq 0\}^+.$ 
Whence the factorization $ x'= V_0V_1\cdots V_n\cdots $ is also  $\varphi$-ultra monochromatic.\end{proof}

In view of  the preceding results, it is natural to ask the following question:

\begin{question}\label{ultra}Let $\varphi:\A^+\rightarrow C$ be a finite coloring of $\A^+$ and  $x\in \A^\omega.$ Does there exist $y\in \Omega(x) $  admitting a $\varphi$-ultra monochromatic factorization? 
\end{question} 

It turns out that in general Question~\ref{ultra} does not admit an affirmative answer. 
We begin by exhibiting a $\varphi:\A^+\rightarrow C$ and $x\in \A^{\omega}$ such that no $y\in \Omega(x)$ admits a $\varphi$-ultra monochromatic factorization.

\begin{lemma}Let $r\in \nats^+$ and $x\in \{0,1\}^\omega$ be a $r$-power free Sturmian word. 
Then for each infinite sequence $\omega=V_0,V_1,V_2,\cdots $ with $V_i\in  \{0,1\}^+$ there exist $k\geq 1,$
$0\leq n_1<n_2<\cdots <n_k$, and a permutation $\sigma$ of $\{1,2,\ldots ,k\}$ such that
$V_{n_{\sigma(1)}}V_{n_{\sigma(2)}}\cdots V_{n_{\sigma(k)}} \notin{Fact}(x)$. 
\end{lemma}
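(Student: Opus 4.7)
The proof proceeds by contradiction. Assume $\omega = V_0, V_1, V_2, \ldots$ is an infinite sequence with each $V_i \in \{0,1\}^+$ such that for every $k \geq 1$, every $0 \leq n_1 < n_2 < \cdots < n_k$, and every permutation $\sigma$ of $\{1,\ldots,k\}$, the word $V_{n_{\sigma(1)}}V_{n_{\sigma(2)}}\cdots V_{n_{\sigma(k)}}$ lies in $\makebox{Fact}(x)$; we shall derive a contradiction by exhibiting a forbidden factor.

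The first step is a reduction to a subsequence of distinct factors of strictly increasing length. Taking $k=1$ forces each $V_i \in \makebox{Fact}(x)$. If some word $V$ occurs at least $r$ times in $\omega$, pick $r$ such indices and $\sigma = \mathrm{id}$: the resulting permuted concatenation equals $V^r$, contradicting the $r$-power freeness of $x$. Hence each value occurs fewer than $r$ times, and the sequence contains infinitely many distinct values. Since Sturmian words have factor complexity $n+1$, there are only finitely many $V_i$ of each length, so we may extract a subsequence $V_{m_0}, V_{m_1}, V_{m_2}, \ldots$ of pairwise distinct factors of $x$ with strictly increasing lengths $\ell_0 < \ell_1 < \ell_2 < \cdots$.

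The next step exploits the structural rigidity of Sturmian factors. For each length $\ell$ there is a unique right-special factor $R_\ell \in \makebox{Fact}(x)$; every other factor $u$ of length $\ell$ admits a unique letter $\delta(u) \in \{0,1\}$ with $u\delta(u) \in \makebox{Fact}(x)$ and $u(1-\delta(u)) \notin \makebox{Fact}(x)$. Passing to a further subsequence, we may assume either (i) every $V_{m_i}$ is non-right-special, or (ii) every $V_{m_i} = R_{\ell_i}$. In case (i), successive pigeonholes fix the first letter $a$ of each $V_{m_i}$ and the forced successor $\delta = \delta(V_{m_i})$; if $a \neq \delta$ then $V_{m_i}V_{m_j}$ cannot occur in $x$ for any $i < j$ (the letter after $V_{m_i}$ in $x$ must be $\delta$, yet $V_{m_j}$ begins with $a$), giving the required non-factor with $k=2$, $\sigma = \mathrm{id}$; if $a = \delta$ one iterates the pigeonhole on successive forced extension letters, and the iteration produces a mismatch in finitely many steps because in an aperiodic Sturmian word no single forced right-infinite continuation can coincide with the initial segments of infinitely many distinct factors of unbounded length. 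In case (ii), the right-special factors of a Sturmian word are nested by suffixes, so each $V_{m_i}$ is a suffix of every $V_{m_j}$ with $j > i$; choosing a permutation in which $V_{m_0}$ immediately follows some $V_{m_j}$ yields the junction pattern $V_{m_0}^2$, and by passing to a sub-subsequence with fast-growing $\ell_i$ (so that some $V_{m_j}$ already ends with $V_{m_0}^2$) one can iterate this accumulation to build a permuted concatenation containing $V_{m_0}^r$, contradicting $r$-power freeness.

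The principal technical obstacle is case (ii): the ``prefix parts'' $W_i$ in the decomposition $R_{\ell_i} = W_i R_{\ell_0}$ generically interrupt the direct stacking of $V_{m_0}$-copies at a single junction, so one must either isolate a $V_{m_j}$ whose suffix already contains $V_{m_0}^2$ (using fast growth of $\ell_i$ together with the Sturmian balance property) or combine several junctions in a single permutation to accumulate enough copies of $V_{m_0}$. A secondary subtlety in case (i) with $a = \delta$ is controlling the iteration of pigeonholes on deeper forced extensions; this termination relies on the aperiodicity of $x$ together with the fact that the distinct $V_{m_j}$'s have unbounded lengths, so their initial segments cannot all uniformly match a fixed forced continuation.
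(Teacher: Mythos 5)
Your opening reductions are sound (each block must be a factor, no value can repeat $r$ times, hence you may pass to pairwise distinct blocks of strictly increasing length), and the easy subcase of your case (i) is correct: if every block is non-right-special with first letter $a$ and forced successor $\delta\neq a$, then $V_{m_i}V_{m_j}$ is already a non-factor. But both remaining subcases --- which is where the actual difficulty of the lemma lives --- are left unresolved, and the sketches you give for them do not hold up. In case (i) with $a=\delta$, the ``iterated pigeonhole on successive forced extension letters'' has no valid termination mechanism: the forced continuation of a non-right-special factor $u$ is a \emph{finite} word, ending as soon as some extension $uw$ becomes right-special, after which both letters may follow and a mismatch with the next block no longer yields a non-factor; there is no ``single forced right-infinite continuation'' to compare against, and in any case infinitely many distinct factors of unbounded length \emph{can} all agree with a fixed infinite word on arbitrarily long initial segments (take the prefixes of any point of $\Omega(x)$), so the principle you invoke for termination is false. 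In case (ii), suffix-nesting of right-special factors does give $V_{m_j}=W_jV_{m_0}$ and hence $V_{m_0}^2$ at a single junction, but the step from $V_{m_0}^2$ to $V_{m_0}^r$ fails: $V_{m_0}$ can occur only once in a permuted concatenation, every other block contributes only $\cdots W_jV_{m_0}$ at a junction, and your parenthetical fix (``some $V_{m_j}$ already ends with $V_{m_0}^2$'') would require $R_{2\ell_0}=R_{\ell_0}^2$, i.e.\ that $V_{m_0}^2$ be right-special, for which there is no reason. You flag these as ``technical obstacles,'' but they are the theorem.

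The paper gets past exactly this hard configuration by a renormalization your argument has no counterpart of. It inducts on $N(\omega)=|V_0V_1\cdots V_r|$. In the inductive step, if two blocks begin with different letters, then for suitable $m<n$ with $|V_n|>r|V_m|$ both $V_mV_n$ and $V_nV_m$ are right-special factors of the same length, hence equal, hence $V_m$ and $V_n$ commute and an $r$-power appears; the same works for last letters. In the remaining case every block begins with $0$ (after reversal if necessary) and $V_i0\in\mbox{Fact}(x)$, so each $V_i$ and $x$ itself desubstitute under $L_0: 0\mapsto 0,\ 1\mapsto 01$ to a new sequence $\omega'$ and a new $r$-power free Sturmian word $x'$ with $N(\omega')<N(\omega)$, while the hypothesis that all permuted concatenations are factors is preserved. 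To complete your proof you would need either this desubstitution step or a genuinely new idea for the two stalled subcases; as written, the argument is incomplete.
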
 

\begin{proof} For each $\omega=V_0,V_1,V_2,\cdots $ with $V_i\in  \{0,1\}^+,$ $i\geq 0$,  set
$N(\omega)=|V_0V_1\cdots V_r|.$ We proceed by induction on $N(\omega)$ to show that for each $\omega=V_0,V_1,V_2,\cdots $ with $V_i\in  \{0,1\}^+,$ and each $r$-power free Sturmian word $x$ 
there exist $k\geq 1,$ $0\leq n_1<n_2<\cdots <n_k$ and a permutation $\sigma$ of $\{1,2,\ldots ,k\}$ such that
$V_{n_{\sigma(1)}}V_{n_{\sigma(2)}}\cdots V_{n_{\sigma(k)}} \notin \mbox{Fact}(x).$

The base case of the induction is when $N(\omega)=r+1,$ i.e., $|V_0|=|V_1|=\cdots =|V_r|=1.$
Let $x$ be a $r$-power free Sturmian word. For $a\in \{0,1\},$ put $\bar{a}=1-a$ so that $\{a,\bar{a}\}=\{0,1\}.$ 
Fix $a\in \{0,1\}$ so that $\bar{a}\bar{a} \notin \mbox{Fact}(x).$    
First suppose that $V_i=V_j=\bar{a}$ for some $0\leq i<j.$   In this case $V_iV_j\notin \mbox{Fact}(x).$
Thus we can assume that at most one $V_i=\bar{a}.$ In this case, there exist  $0\leq n_1<n_2<\cdots <n_r\leq r$
such that $V_{n_i}=a$ for each $1\leq i\leq r.$ It follows that $V_{n_1}V_{n_2}\cdots V_{n_r}=a^r\notin \mbox{Fact}(x).$

For the inductive step, let $N>r+1,$ and suppose that for each  $\omega=V_0,V_1,V_2,\cdots $ with $V_i\in  \{0,1\}^+$ and $N(\omega)<N$ and for each $r$-power free  Sturmian word $x$ there exist $k\geq 1,$ $0\leq n_1<n_2<\cdots <n_k$, and a permutation $\sigma$ of $\{1,2,\ldots ,k\}$ such that
$V_{n_{\sigma(1)}}V_{n_{\sigma(2)}}\cdots V_{n_{\sigma(k)}} \notin \mbox{Fact}(x).$
Now let $\omega=V_0,V_1,V_2,\cdots $ with $V_i\in  \{0,1\}^+$, $i\geq 0$ and  $N(\omega)=N$ and let $x$ be a $r$-power free Sturmian word. Without loss of generality we may assume $11\notin \mbox{Fact}(x)$ and that $x$ begins with $0.$ 
Note that if $11\notin \mbox{Fact}(x)$ and $x$ begins with $1$, we can replace $x$ with $0x$ which is Sturmian and $r$-power free. We claim that for some $k\geq 1,$ and $0\leq n_1<n_2<\cdots <n_k$ and permutation $\sigma$ of $\{1,2,\ldots ,k\}$ we have $V_{n_{\sigma(1)}}V_{n_{\sigma(2)}}\cdots V_{n_{\sigma(k)}} \notin \mbox{Fact}(x).$ Suppose to the contrary that for every $k\geq 1,$ $0\leq n_1<n_2<\cdots <n_k$ and permutation $\sigma$ of $\{1,2,\ldots ,k\}$ we have $V_{n_{\sigma(1)}}V_{n_{\sigma(2)}}\cdots V_{n_{\sigma(k)}} \in \mbox{Fact}(x).$ Since $x$ is $r$-power free, we have $\limsup_{n\rightarrow \infty}|V_n|=+\infty.$ 

Suppose first that for some $a\in \{0,1\}$ there exist $0\leq i<j$ such that $V_i$ begins with $a$ and $V_j$ begins with $\bar{a}.$ Pick $j<m<n$ such that $|V_{n}|>r|V_{m}|.$ Since $V_{m}V_{n}V_i, $ $V_{m}V_{n}V_j, $ $V_{n}V_{m}V_i,$ and $V_{n}V_{m}V_j$ are each factors of $x,$ it follows that each of $V_{m}V_{n}$ and $V_{n}V_{m}$ is a right special factor of $x.$ But since $|V_{m}V_{n}|=|V_{n}V_{m}|$ and $x$ has exactly one right special factor of each length, it follows that $V_{m}V_{n}=V_{n}V_{m},$
 from which one easily derives that $V_{m}^r$ is a prefix of $V_n$ and hence in particular $V_{m}^r\in \mbox{Fact}(x),$ a contradiction.  Thus we may suppose that all $V_i$ begin with the same letter $a\in \{0,1\}.$ A similar argument shows that all $V_i$ terminate with the same letter $b\in \{0,1\}.$ Moreover, as
$11\notin \mbox{Fact}(x),$ either $a$ or $b$ must equal $0.$ Since $\mbox{Fact}(x)$ is closed under reversal, short of replacing each $V_i$ in $\omega$ by its reversal, we may suppose that $a=0,$ i.e., each $V_i$ begins with $0.$  
Thus $V_i0\in \mbox{Fact}(x)$ for each $i\geq 0.$

 Now consider the morphism $L_0: 0\mapsto 0,$ and $1\mapsto 01.$ For each $i\geq 0,$ define $V_i'\in \{0,1\}^+$ by $L_0(V_i')=V_i$ and put $\omega '=V_0',V_1',V_2',\ldots .$ Finally, as $x$ begins with $0$, define $x'\in \{0,1\}^\omega$ by $L_0(x')=x.$ Then, as is well known, $x'$ is a Sturmian word. Moreover, since  $x$ is $r$-power free, so is $x'$ and at least one $V_i$ with $0\leq i\leq r-1$ must contain an occurrence of $1.$ Thus $N(\omega')<N(\omega).$ 
For each $k\geq 1,$ $0\leq n_1<n_2<\cdots <n_k$ and  permutation $\sigma$ of $\{1,2,\ldots ,k\},$ 
we have $V_{n_{\sigma(1)}}V_{n_{\sigma(2)}}\cdots V_{n_{\sigma(k)}}0 \in \mbox{Fact}(x).$ 
Thus $V'_{n_{\sigma(1)}}V'_{n_{\sigma(2)}}\cdots V'_{n_{\sigma(k)}} \in \mbox{Fact}(x'),$ and this is a contradiction to  our inductive hypothesis.
\end{proof}
We mention that very recently Anna Frid \cite{AF} has extended the validity of previous lemma to the case of any infinite word of linear factor complexity.

\begin{proposition}\label{CE}Let $r\in \nats^+$ and $x\in \{0,1\}^\omega$ be a $r$-power free Sturmian word. Define $\varphi:\{0,1\}^+\rightarrow \{0,1\}$ by $\varphi(u)=0$ if $u$ is a factor of $x$ and $\varphi(u)=1$ otherwise. 
Then no $y\in \Omega(x)$ admits a $\varphi$-ultra monochromatic factorization.

\end{proposition}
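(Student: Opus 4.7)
The approach is to combine the preceding lemma with the well-known fact that a Sturmian word generates a minimal subshift whose elements are all Sturmian. Specifically, I would argue as follows.

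First I would observe that, since every Sturmian word is uniformly recurrent, the subshift $\Omega(x)$ is minimal, and hence $\mbox{Fact}(y)=\mbox{Fact}(x)$ for every $y\in \Omega(x)$. Consequently every $y\in\Omega(x)$ has factor complexity $n+1$, is closed under reversal, and is $r$-power free; in particular every such $y$ is itself a Sturmian word to which the previous lemma applies.

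Next, I would argue by contradiction. Suppose some $y=V_0V_1V_2\cdots\in\Omega(x)$ admits a $\varphi$-ultra monochromatic factorization with common color $c\in\{0,1\}$. Since $V_0$ is a factor of $y$, hence of $x$, we have $\varphi(V_0)=0$, so $c=0$. By the definition of $\varphi$-ultra monochromatic, for every $k\geq 1$, every $0\leq n_1<n_2<\cdots<n_k$ and every permutation $\sigma$ of $\{1,\ldots,k\}$, the word
\[
V_{n_{\sigma(1)}}V_{n_{\sigma(2)}}\cdots V_{n_{\sigma(k)}}
\]
is a factor of $x$, and therefore a factor of $y$.

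Finally I would apply the preceding lemma to the $r$-power free Sturmian word $y$ and the sequence $\omega=V_0,V_1,V_2,\ldots$ of nonempty words from $\{0,1\}^+$. The lemma yields some $k$, indices $0\leq n_1<\cdots<n_k$ and a permutation $\sigma$ such that $V_{n_{\sigma(1)}}\cdots V_{n_{\sigma(k)}}\notin\mbox{Fact}(y)$, which directly contradicts the conclusion of the previous paragraph. Hence no $y\in\Omega(x)$ can admit such a factorization. The only point requiring a bit of care is the invocation of minimality to transfer the $r$-power-freeness and the Sturmian property from $x$ to an arbitrary element of $\Omega(x)$; the rest is a direct application of the lemma.
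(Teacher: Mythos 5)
Your proposal is correct and follows essentially the same route as the paper: transfer the Sturmian and $r$-power-free properties (and the equality of factor sets) from $x$ to $y$ via minimality, note $\varphi(V_0)=0$, and then apply the preceding lemma to $y$ to produce a permuted concatenation outside $\mbox{Fact}(y)=\mbox{Fact}(x)$, hence of color $1$. If anything, you are slightly more explicit than the paper about why every $y\in\Omega(x)$ is itself an $r$-power free Sturmian word with the same factors, a point the paper leaves to its preliminary remarks on minimality.
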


\begin{proof}Let $y\in \Omega(x)$ and consider any factorization $y=V_0V_1V_2\cdots .$ Then since $V_0$ is a factor of $x,$ we have that $\varphi(V_0)=0.$ On the other hand, since $y$ is also Sturmian and $r$-power free, by the previous lemma there exist $k\geq 1,$ $0\leq n_1<n_2<\cdots <n_k$ and a permutation $\sigma$ of $\{1,2,\ldots ,k\}$ 
such that $V_{n_{\sigma(1)}}V_{n_{\sigma(2)}}\cdots V_{n_{\sigma(k)}}\notin \mbox{Fact}(y).$ Hence 
$\varphi( V_{n_{\sigma(1)}}V_{n_{\sigma(2)}}\cdots V_{n_{\sigma(k)}})=1$ from which it follows that no $y\in \Omega(x)$ admits a $\varphi$-ultra monochromatic factorization.
\end{proof}

We next show (cf. Corollary \ref{cor: bern})  that if $\A$ is finite, then Question~\ref{ultra} admits an affirmative answer for almost all $x\in \A^\omega,$ where ``almost all" refers to the Bernoulli measure on $\A^\omega.$  We begin by showing that Question~\ref{ultra} admits a positive answer in case $x$ is periodic. Even this simplest case however turns out to be somewhat nontrivial, and in fact is equivalent to the so-called Finite Sums Theorem proved by N. Hindman in \cite{H}.

\begin{thm} \label{Hin}The following statements are equivalent:
\begin{enumerate} 
\item For every finite coloring $\varphi: \A^+\rightarrow C,$ each periodic word $x\in \A^\omega$ admits a $\varphi$-ultra monochromatic factorization.

\item For each finite coloring $\varphi:\nats^+\rightarrow C$  of the positive integers, there exist
$c\in C$ and an infinite sequence $(n_k)_{k=0}^\infty$ such that
$\makebox{FS}((n_k)_{k=0}^\infty)=\{\sum_{i\in F}n_i\,| F\in \makebox{Fin}(\nats)\}\subseteq \varphi^{-1}(c)$.
\end{enumerate}
\end{thm}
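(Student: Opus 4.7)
The plan is to prove the equivalence by exploiting the correspondence between factorizations of a periodic word $u^\omega$ and additive decompositions of positive integers, so that ultra monochromatic factorizations translate directly into finite-sums monochromatic IP sets.

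For the direction $(2)\Longrightarrow (1)$, I would start with a periodic word $x=u^\omega$ with $u\in \A^+$ and a finite coloring $\varphi:\A^+\rightarrow C$. I would induce a coloring $\varphi':\nats^+\rightarrow C$ by setting $\varphi'(n)=\varphi(u^n)$. Applying Hindman's theorem yields a color $c\in C$ and an infinite sequence $(n_k)_{k=0}^\infty$ of positive integers such that every finite sum $\sum_{i\in F} n_i$ has $\varphi'$-color $c$. Now set $V_k=u^{n_k}$. Then $x=V_0V_1V_2\cdots$ is a factorization of $x$, and for any $k\geq 1$, any $0\leq m_1<\cdots <m_k$, and any permutation $\sigma$ of $\{1,\dots,k\}$, one has $V_{m_{\sigma(1)}}\cdots V_{m_{\sigma(k)}}=u^{N}$ with $N=\sum_{j=1}^k n_{m_j}\in\makebox{FS}((n_k)_{k=0}^\infty)$, so $\varphi(V_{m_{\sigma(1)}}\cdots V_{m_{\sigma(k)}})=\varphi'(N)=c$, proving the factorization is $\varphi$-ultra monochromatic.

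For the converse $(1)\Longrightarrow (2)$, the trick I would use is to shrink the alphabet to a singleton so that $\A^+$ becomes isomorphic, as a semigroup, to $(\nats^+,+)$. Given a finite coloring $\varphi:\nats^+\rightarrow C$, take $\A=\{a\}$ and define $\hat{\varphi}:\A^+\rightarrow C$ by $\hat{\varphi}(a^n)=\varphi(n)$. The word $x=a^\omega$ is periodic, so by $(1)$ it admits a $\hat{\varphi}$-ultra monochromatic factorization $x=V_0V_1V_2\cdots$ with $V_i=a^{n_i}$ for certain positive integers $n_i$. For any $F=\{m_1<\cdots<m_k\}\in\makebox{Fin}(\nats)$, the concatenation $V_{m_1}\cdots V_{m_k}$ equals $a^{\sum_{i\in F}n_i}$ and has $\hat{\varphi}$-color $c:=\hat{\varphi}(V_0)$. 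Hence $\varphi\bigl(\sum_{i\in F}n_i\bigr)=c$, which gives $\makebox{FS}((n_k)_{k=0}^\infty)\subseteq \varphi^{-1}(c)$, as required.

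I do not expect a genuine obstacle here: the whole point is that the semigroup $(\A^+,\cdot)$ quotiented by the relation \textquotedblleft same power of $u$\textquotedblright{} is exactly $(\nats^+,+)$, so an ultra monochromatic factorization of $u^\omega$ is literally the same data as a monochromatic IP set. The only minor subtlety to handle carefully is the bookkeeping with permutations $\sigma$, but since the semigroup $(\nats^+,+)$ is commutative, reordering factors has no effect once one passes to exponents, so the \emph{ultra} condition becomes automatic from the \emph{sequential} one in this commutative setting. This commutativity is precisely what makes the two statements equivalent rather than merely one implying the other.
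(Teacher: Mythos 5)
Your proof is correct and takes essentially the same route as the paper: direction $(2)\Rightarrow(1)$ transfers $\varphi$ to exponents via $\varphi'(n)=\varphi(u^n)$ and applies Hindman's theorem, and direction $(1)\Rightarrow(2)$ uses the single-letter word $a^\omega$ exactly as the paper does. One caveat on your closing aside: it is \emph{not} true that the ultra condition ``becomes automatic from the sequential one'' in this commutative setting --- a sequentially monochromatic factorization only controls sums of \emph{consecutive} exponents $n_i+n_{i+1}+\cdots+n_{i+j}$ (which is governed by Ramsey's theorem, as in Theorem~\ref{ram}), whereas the ultra condition requires arbitrary finite sums $\sum_{i\in F}n_i$, which is precisely why the full strength of Hindman's theorem is needed; commutativity only disposes of the permutations, not of the passage from consecutive blocks to arbitrary finite subsets. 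Your actual proof does not rely on this misstatement, so it stands.
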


\begin{proof} We  note that item $(2)$ is the Finite Sums Theorem  by  Hindman. 
We begin by showing that $(1)\Longrightarrow (2).$ Let $\varphi:\nats^+\rightarrow C$ be a finite coloring of the positive integers, and let $x$ be the periodic word $x=a^\omega$, with $a\in \A$.  Then $\varphi$ induces a finite coloring $\varphi ': \{a\}^+\rightarrow C$ given by $\varphi ' (a^n)=\varphi (n).$ By $(1)$ there exists a $\varphi'$-ultra monochromatic factorization $x=V_0V_1V_2\cdots.$ Put $c=\varphi'(V_0).$ For $k\geq 0,$ set $n_k=|V_k|$ so that each $V_k=a^{n_k}.$  Then for each finite subset $F$ of $\nats,$ we have
$$\varphi (\sum_{i\in F}n_i)=\varphi(\sum_{i\in F}|V_i|)=\varphi(|\prod _{i\in F}V_i|)=\varphi'(\prod _{i\in F}V_i)=c$$ since $\prod _{i\in F}V_i=a^{\sum_{i\in F}n_i}$ and hence is a factor of $x.$  Whence $\makebox{FS}((n_k)_{k=0}^\infty)=\{\sum_{i\in F}n_i\,| F\in \makebox{Fin}(\nats)\}\subseteq \varphi^{-1}(c).$

To see that $(2)\Longrightarrow (1),$ let $\varphi: \A^+\rightarrow C,$ $u\in \A^+,$  and $x=u^\omega.$  Define  $\varphi':\nats^+\rightarrow C$ by $\varphi '(n)=\varphi (u^n).$ 
By $(2)$ there exist $c\in C$ and an infinite sequence $(n_k)_{k=0}^\infty$ such that
$\makebox{FS}((n_k)_{k=0}^\infty)=\{\sum_{i\in F}n_i\,| F\in \makebox{Fin}(\nats)\}\subseteq \varphi'^{-1}(c).$ For each $k\geq 0$ set $V_k=u^{n_k}.$ Then clearly the factorization $x=V_0V_1V_2\cdots$ is $\varphi$-ultra monochromatic. 
\end{proof}

\noindent As an immediate consequence we obtain:

\begin{cor}\label{cor: bern}Let $\A$ be a finite set, and let $\mu$ be the Bernoulli measure on $\A^\omega.$ Let $\varphi: \A^+\rightarrow C$ be any finite coloring. Then for $\mu$-almost all $x\in \A^\omega$ 
there exists $y\in \Omega(x)$ which admits a $\varphi$-ultra monochromatic factorization. 
\end{cor}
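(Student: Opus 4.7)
The plan is to derive the corollary directly from Theorem~\ref{Hin} by showing that, for $\mu$-almost all $x\in\A^\omega$, the subshift $\Omega(x)$ contains a purely periodic word (in fact, every periodic word over $\A$), and then to apply the periodic case of the ultra monochromatic factorization theorem to that periodic point.

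First I would recall (or prove via a direct Borel--Cantelli argument) that for the Bernoulli measure $\mu$ on $\A^\omega$, the set
\[
\U=\set{x\in\A^\omega}{\makebox{Fact}(x)=\A^+}
\]
has full $\mu$-measure. Indeed, for any fixed $w\in\A^+$ of length $\ell$, the blocks $x_{k\ell}x_{k\ell+1}\cdots x_{(k+1)\ell-1}$ for $k\geq 0$ are i.i.d.\ under $\mu$, each equal to $w$ with probability $d^{-\ell}>0$, so by the second Borel--Cantelli lemma $w$ occurs in $x$ with probability $1$. Since $\A^+$ is countable, intersecting over all $w\in \A^+$ still gives a set of full measure.

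Next, fix any $x\in\U$ and any letter $a\in\A$. Since $a^n\in\makebox{Fact}(x)$ for every $n\geq 1$, the periodic word $a^\omega$ satisfies $\makebox{Fact}(a^\omega)\subseteq\makebox{Fact}(x)$, so $a^\omega\in\Omega(x)$ by the description of $\Omega(x)$ recalled in the preliminaries. Now apply Theorem~\ref{Hin}: since Hindman's Finite Sums Theorem holds, statement (2) of that theorem is true, hence statement (1) is true as well, and the periodic word $a^\omega$ admits a $\varphi$-ultra monochromatic factorization. Taking $y=a^\omega$ produces the required element of $\Omega(x)$.

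There is no real obstacle here: the only non-formal ingredient is the measure-theoretic fact that a $\mu$-generic $x$ contains every finite word as a factor, which is standard, and the rest is a one-line application of Theorem~\ref{Hin}. Note that this argument does not require $\Omega(x)$ to equal $\A^\omega$; it only uses that $\Omega(x)$ contains a single purely periodic point, which already suffices.
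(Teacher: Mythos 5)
Your proposal is correct and follows essentially the same route as the paper: the paper likewise observes that $\mu$-almost every $x$ has $\mathrm{Fact}(x)=\A^+$ (citing a standard reference rather than spelling out the Borel--Cantelli argument), deduces that $a^\omega\in\Omega(x)$ for some $a\in\A$, and concludes by Theorem~\ref{Hin}. No gaps.
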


\begin{proof}As  is well known  almost all words $x\in \A^\omega$ with respect to the measure $\mu$,  are of full complexity, meaning
$\mbox{Fact}(x)=\A^+$ (see, for instance, \cite[Theorem 10.1.6]{AS}). (As an example, {\em normal words} \cite[Chap. 8]{NI} are of full complexity). Thus for almost all words $x\in \A^\omega,$ relatively to measure $\mu$,  there exists $a\in \A$ such that $a^\omega\in \Omega(x).$  The result now follows from Theorem~\ref{Hin}.
\end{proof}

The above results suggest the following questions:

\begin{question}Let $x\in \A^\omega$ be a uniformly recurrent word. Suppose that for each $\varphi: \A^+\rightarrow C,$ there exists $y\in \Omega(x)$ admitting a $\varphi$-ultra monochromatic factorization. Then does it follow that $x$ is periodic?
\end{question} 

Let $\varphi: \A^+\rightarrow C$ and $x\in \A^\omega.$ A factorization $x=V_0V_1V_2\cdots$ with $V_i\in\A^+$, $i\geq 0$, is  called $\varphi$-{\it conditionally monochromatic} if $\exists c\in C$ such that $\forall k\geq 1,$ for all $n_1<n_2<\cdots <n_k$ and for all permutations $\sigma$ of $\{1,2,\ldots ,k\}$ we have either $V_{n_{\sigma(1)}}V_{n_{\sigma(2)}}\cdots V_{n_{\sigma(k)}} \notin \mbox{Fact}(x)$ or $\varphi(V_{n_{\sigma(1)}}V_{n_{\sigma(2)}}\cdots V_{n_{\sigma(k)}})=c.$ 

Thus a $\varphi$-ultra monochromatic factorization of a word $x\in \A^\omega$ is a $\varphi$-conditionally monochromatic factorization, but not  {\em vice versa}. For instance, consider  $x= 010110111011110  \cdots$. We saw in  Example \ref{ex:uno}
that relative to the coloring $\varphi: \{0,1\}^+\rightarrow \{0,1\}$  defined by  $\varphi(u)=0$ if $u$ is a factor of $x,$ and $\varphi(u)=1$ otherwise, no suffix of $x$ admits a $\varphi$-ultra monochromatic factorization. On the other hand given any coloring $\varphi$ of
 $\{0,1\}^+$, by Theorem \ref{ram} there exists a suffix $x'$  admitting a $\varphi$-sequentially monochromatic factorization $ x=V_0V_1V_2 \cdots$.  By concatenating several of the $V_i$  together (as in the proof of Theorem \ref{ram}), we can assume that each $V_i$  contains at least two occurrences of $0$. The resulting $\varphi$-sequentially monochromatic factorization is then also $\varphi$-conditionally monochromatic since the only concatenation of blocks which yields a factor of $x$ are consecutive concatenations.

\begin{question} Let $\varphi:\A^+\rightarrow C$ be a finite coloring of $\A^+$ and  $x\in \A^\omega.$ Does there exist $y\in \Omega(x) $  admitting a $\varphi$-conditionally monochromatic factorization? 
\end{question}

We have not a single example of an aperiodic uniformly recurrent word in which we can give an answer (positive or negative) to the above question.

\section{Shift invariant monochromatic factorizations}

Let $\varphi: \A^+\rightarrow C$ be a finite coloring of $\A^+,$    
$x\in \A^\omega,$ and $k$ be a positive integer. A $\varphi$-monochromatic (resp., $\varphi$-sequentially monochromatic, $\varphi$-ultra monochromatic) factorization  $x=V_0V_1V_2\cdots $ is said to be $k$-{\it shift invariant} if for each $1\leq j\leq k$ the
induced factorization $T^j(x)=W_0W_1W_2\cdots $ with $|W_i|=|V_i|$, $i\geq 0$, is $\varphi$-monochromatic (resp., $\varphi$-sequentially monochromatic, $\varphi$-ultra monochromatic). A $\varphi$-monochromatic (resp., $\varphi$-sequentially monochromatic, $\varphi$-ultra monochromatic) factorization  $x=V_0V_1V_2\cdots $ is called {\it shift invariant} if for each positive integer $j,$ the induced factorization $T^j(x)=W_0W_1W_2\cdots $ with $|W_i|=|V_i|$ is $\varphi$-monochromatic (resp., $\varphi$-sequentially monochromatic, $\varphi$-ultra monochromatic).\\

\noindent We begin with the following simple variation of the infinite Ramsey  theorem, whose proof is omitted as it is a simple iterated application of the usual version of Ramsey's theorem.

\begin{proposition}Let $\varphi: \Sigma_2(\nats)\rightarrow C$ be a finite coloring and $k$ a nonnegative integer. There exists an infinite set $\mathcal{N}\subseteq \nats$ and a sequence $(c_i)_{i=0}^k$ such that for each $0\leq i\leq k$ we have $c_i\in C$ and
\[ \Sigma_2(\mathcal{N}+i)\subseteq \varphi^{-1}(c_i).\]
\end{proposition}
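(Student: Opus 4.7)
The plan is to reduce the proposition to the ordinary infinite Ramsey theorem applied finitely many times, exactly as suggested by the phrase \emph{iterated application} in the statement. The key observation is that asking $\Sigma_2(\mathcal{N}+i)\subseteq \varphi^{-1}(c_i)$ is the same as asking the auxiliary coloring
\[ \varphi_i : \Sigma_2(\nats)\rightarrow C, \qquad \varphi_i(\{m<n\})=\varphi(\{m+i,n+i\}), \]
to be constant on $\Sigma_2(\mathcal{N})$, and we need this simultaneously for all $0\le i\le k$.

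First I would choose between two equivalent routes. The cleaner route is to form the product coloring
\[ \Phi : \Sigma_2(\nats)\rightarrow C^{k+1}, \qquad \Phi(\{m<n\})=\bigl(\varphi_0(\{m<n\}),\varphi_1(\{m<n\}),\ldots,\varphi_k(\{m<n\})\bigr), \]
which is a finite coloring with at most $|C|^{k+1}$ colors. By the infinite Ramsey theorem (item (2) of Theorem~\ref{ram}) there exist an infinite set $\mathcal{N}\subseteq \nats$ and a tuple $(c_0,c_1,\ldots,c_k)\in C^{k+1}$ such that $\Phi$ takes the constant value $(c_0,\ldots,c_k)$ on $\Sigma_2(\mathcal{N})$. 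Reading off the $i$-th coordinate yields $\varphi_i(\{m<n\})=c_i$ for every pair $\{m<n\}\subseteq\mathcal{N}$, which is precisely $\Sigma_2(\mathcal{N}+i)\subseteq\varphi^{-1}(c_i)$ after relabelling the pair by adding $i$ to both endpoints.

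The alternative iterative route, which matches the wording of the proposition more closely, is: apply the infinite Ramsey theorem to $\varphi_0$ to obtain an infinite monochromatic set $\mathcal{N}_0\subseteq\nats$ with color $c_0$; then apply it to the restriction of $\varphi_1$ to $\Sigma_2(\mathcal{N}_0)$ to obtain an infinite $\mathcal{N}_1\subseteq\mathcal{N}_0$ monochromatic for $\varphi_1$ with color $c_1$; iterate $k+1$ times in total and take $\mathcal{N}=\mathcal{N}_k$. Any initial monochromaticity is preserved under passing to infinite subsets, so $\mathcal{N}$ is simultaneously monochromatic for each $\varphi_i$.

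There is essentially no obstacle here: the only point that deserves care is the bookkeeping identification $\Sigma_2(\mathcal{N}+i)=\{\{m+i,n+i\}\mid\{m,n\}\in\Sigma_2(\mathcal{N})\}$, and the trivial remark that when we shift both coordinates of a pair by the same $i$ the order is preserved, so no reindexing issue arises. Everything else is the standard finite-product trick for combining finitely many Ramsey-type conclusions into one, which is precisely why the authors chose to omit the proof.
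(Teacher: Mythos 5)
Your proof is correct and matches the paper's intended argument: the paper omits the proof precisely because it is ``a simple iterated application of the usual version of Ramsey's theorem,'' which is exactly your second route (and your product-coloring route is an equivalent packaging of the same idea).
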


\noindent As an immediate consequence we deduce that

\begin{cor}Let $\varphi :\A^+\rightarrow C$, $x\in \A^\omega,$ and  $k\geq 1.$ Then there exists a suffix $x'$ of $x$ which admits a $k$-shift invariant $\varphi$-sequentially monochromatic factorization. 
\end{cor}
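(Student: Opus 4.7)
The approach is a direct generalization of the implication $(2)\Longrightarrow (1)$ in the proof of Theorem~\ref{ram}, now invoking the preceding proposition (the $k$-refined Ramsey statement) in place of the standard Infinite Ramsey Theorem. The idea is that a single infinite set $\mathcal{N}$ which is Ramsey-good for all of the shifts $0,1,\ldots ,k$ simultaneously supplies one common set of cut points that serves every shift at once.

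First I would associate to $\varphi$ and $x=x_0x_1x_2\cdots$ the induced finite coloring
\[
\varphi': \Sigma_2(\nats)\rightarrow C,\qquad \varphi'(\{m<n\})=\varphi(x_m x_{m+1}\cdots x_{n-1}),
\]
exactly as in Theorem~\ref{ram}. Applying the preceding proposition to $\varphi'$ produces an infinite set $\mathcal{N}=\{n_0<n_1<n_2<\cdots\}\subseteq \nats$ together with colors $c_0,c_1,\ldots,c_k\in C$ such that $\Sigma_2(\mathcal{N}+i)\subseteq \varphi'^{-1}(c_i)$ for every $0\le i\le k$.

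Next I would set $x'=T^{n_0}(x)$ and factor it as $x'=V_0V_1V_2\cdots$ with $|V_l|=n_{l+1}-n_l$, so that the successive cut points of this factorization, measured inside $x$, are exactly the elements of $\mathcal{N}$. For each fixed $0\le j\le k$, the induced factorization of $T^j(x')=T^{n_0+j}(x)$ into blocks $W_0^{(j)},W_1^{(j)},W_2^{(j)},\ldots$ with $|W_l^{(j)}|=|V_l|$ has its cut points at the elements of $\mathcal{N}+j$. Consequently any consecutive concatenation $W_l^{(j)}W_{l+1}^{(j)}\cdots W_{l+m}^{(j)}$ corresponds to a pair in $\Sigma_2(\mathcal{N}+j)$, and therefore receives the single color $c_j$ under $\varphi$. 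The case $j=0$ shows that $x'=V_0V_1V_2\cdots$ is itself $\varphi$-sequentially monochromatic, and the cases $1\le j\le k$ supply the $k$-shift invariance.

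The only non-routine issue is bookkeeping: one must verify that shifting the suffix by $j$ indeed shifts every cut point by $j$ (so that the relevant pairs lie in $\Sigma_2(\mathcal{N}+j)$ rather than $\Sigma_2(\mathcal{N})$), and observe that the definition of $k$-shift invariance permits the constants $c_0,c_1,\ldots,c_k$ to differ across the $k+1$ shifts. All the genuine combinatorial content has been absorbed into the preceding proposition, so no further Ramsey-theoretic input is needed.
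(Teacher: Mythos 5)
Your proposal is correct and follows exactly the paper's own (one-line) proof: apply the preceding proposition to the induced coloring $\varphi'(\{m<n\})=\varphi(x_m\cdots x_{n-1})$ and read off the factorization from the resulting set $\mathcal{N}$. The bookkeeping you spell out (cut points of the $j$-th shifted factorization lying in $\mathcal{N}+j$, and the colors $c_j$ being allowed to differ) is accurate and is precisely what the paper leaves implicit.
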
 

\begin{proof} As in the proof of Theorem~\ref{ram}, we  apply the above variation of Ramsey's theorem to the coloring $\varphi':\Sigma_2(\nats)\rightarrow C$ given by $\varphi '(\{m<n\})=\varphi(x_{m}x_{m+1}\cdots x_{n-1}).$
\end{proof} 

\begin{proposition} A word $x\in \A^\omega$ is ultimately periodic  if and only if for every finite coloring
$\varphi: \A^+\rightarrow C$ there exists a suffix of $x$ which admits a shift invariant $\varphi$-monochromatic factorization. 
\end{proposition}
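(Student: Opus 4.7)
The forward direction is immediate. Write $x=uv^{\omega}$ with $u\in\A^{*}$ and $v\in\A^{+}$, and consider the suffix $x'=v^{\omega}$. I claim the trivial factorization $x'=v\cdot v\cdot v\cdots$ is shift invariant $\varphi$-monochromatic for \emph{every} finite coloring $\varphi$: for each $j\geq 1$, writing $j=q|v|+r$ with $0\leq r<|v|$, one has $T^{j}(x')=w^{\omega}$, where $w=v_{r}\cdots v_{|v|-1}v_{0}\cdots v_{r-1}$ is the $r$-th cyclic conjugate of $v$, and the induced factorization of $T^{j}(x')$ into blocks of length $|v|$ is $w\cdot w\cdot w\cdots$, which is trivially $\varphi$-monochromatic no matter what $\varphi$ is.

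For the converse I will argue by contrapositive: assume $x$ is aperiodic, and (as is implicit throughout this section) that $\A$ is finite. Take $\varphi:\A^{+}\rightarrow\A$ to be the first-letter coloring $\varphi(u)=u_{0}$. Suppose, aiming at a contradiction, that some suffix $x'=T^{N}(x)$ admits a shift invariant $\varphi$-monochromatic factorization $x'=V_{0}V_{1}V_{2}\cdots$, and set $n_{i}=|V_{i}|$ and $\ell_{i}=n_{0}+\cdots+n_{i-1}$. The $\varphi$-monochromaticity of the original factorization (case $j=0$) together with that of each shifted factorization ($j\geq 1$) amounts to the assertion that for every $j\geq 0$, the letter $x'_{\ell_{i}+j}$ (the first letter of the $i$-th block of the $j$-shifted factorization) is independent of $i$; denote this common value by $a_{j}$.

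Specialising the equation $x'_{\ell_{i}+j}=a_{j}$ to $i=0$ yields $a_{j}=x'_{j}$ for every $j\geq 0$, while specialising to $i=1$ and relabelling the free index gives $x'_{n_{0}+k}=a_{k}=x'_{k}$ for every $k\geq 0$. Therefore $x'$ is purely periodic of period $n_{0}$, whence $x$ is ultimately periodic, contradicting our hypothesis. The only real thing to watch, and the (mild) main point of the argument, is to parse the definition of shift invariance correctly and to notice that the first-letter coloring, combined with invariance under \emph{every} positive shift simultaneously, is rigid enough to force periodicity of $x'$ itself.
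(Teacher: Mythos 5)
Your proof is correct. The forward direction is the same as the paper's (the constant factorization $v\cdot v\cdot v\cdots$ of $v^{\omega}$, whose $j$-shifts are constant factorizations by a conjugate of $v$). The converse, however, takes a genuinely different and more elementary route. The paper also argues by contrapositive, but it first chooses a recurrent word $y\in\Omega(x)$, colors words by whether they are prefixes of $y$, and then uses recurrence plus the pigeonhole principle to find two shifts $i<j$ whose induced factorizations have equal blocks, forcing $T^{i}(x')=T^{j}(x')$. Your first-letter coloring $\varphi(u)=u_{0}$ bypasses the orbit-closure machinery entirely: shift invariance for all $j$ says exactly that $x'_{\ell_{i}+j}$ is independent of $i$, and comparing the rows $i=0$ and $i=1$ immediately yields the explicit period $n_{0}=|V_{0}|$ for $x'$. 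This is shorter, needs no existence result for recurrent words, and produces a concrete period. What the paper's argument buys in exchange is generality in the alphabet: its coloring takes values in $\{0,1\}$ and so is a finite coloring for an arbitrary set $\A$, whereas your coloring has color set $\A$ and is only finite when $\A$ is --- a restriction you correctly flag, and which is harmless for the finite alphabets that most of the paper concerns, but does make your proof slightly less general than the statement as written (the paper only assumes $\A$ is a non-empty set).
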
 

\begin{proof} Clearly, if $x$ is ultimately periodic, and hence of the form $x=uv^\omega$ for some $u, v\in \A^*$ with $v\neq \varepsilon$, then for any $\varphi: \A^+\rightarrow C,$ the factorization $v\cdot v\cdot v \cdot \cdots$ of the suffix $v^\omega$ is shift invariant $\varphi$-monochromatic. Conversely, suppose $x$ is aperiodic. 
Choose a recurrent word $y\in \Omega(x).$ Thus each prefix of $y$ occurs infinitely often in $x.$ Let $\varphi :\A^+\rightarrow \{0,1\}$ be given by $\varphi (u)=0$ if $u$ is a prefix of $y$ and $\varphi(u)=1$ otherwise. Let $x'$ be any suffix of $x.$ We claim that $x'$ does not admit a shift invariant $\varphi$-monochromatic factorization. In fact, suppose to the contrary that $x'$ admits a shift invariant $\varphi$-monochromatic factorization $x'=V_0V_1V_2\cdots.$ Since $y$ is recurrent  and each prefix of $y$ occurs infinitely often in $x,$ there exist $0\leq i<j$ such that if we consider  the shifted factorizations $T^i(x')=W_0W_1W_2\cdots$ and $T^j(x')=W'_0W'_1W'_2\cdots,$ where $|W_i|=|W'_i|=|V_i|$ for each $i\geq 0$, both $W_0$ and $W'_0$ are prefixes of $y.$ It follows that  $W_i$ and  $W'_i$ are prefixes of $y$ for each $i\geq 0.$ But since they are of equal length, we have $
 W_i=W'_i$ for each $i\geq 0.$ Thus
$T^i(x')=T^j(x')$ which implies that $x$ is ultimately periodic, a contradiction.
\end{proof}

We recall that a subset $A$ of $\nats^+$ is called an {\it IP set} if $A$ contains $\makebox{FS}((n_i)_{i=1}^\infty) $ for some infinite sequence
$(n_i)_{i=1}^\infty.$ In terms of IP sets, Hindman's theorem states that any finite coloring of  $\nats^+$ contains a monochromatic IP set.
By using the so-called {\em Finite Unions Theorem}, which is equivalent to Hindman's Finite Sums Theorem (cf. \cite{Bau,Mi}), one can show that
IP sets in $\nats^+$ are {\em partition regular}, i.e.,  if $A$ is an IP set and  $A=\bigcup_{i=1}^kA_i,$ then there exists  $1\leq i \leq k$ such that $A_i$ is an IP set. We recall also the following well-known theorem of Milliken-Taylor \cite{Mi, Ta}:
\begin{thm}\label{thm:MT} Let $k$ be a positive integer and $\varphi: \Sigma_k(\nats^+)\rightarrow C$ a finite coloring. Then there exist $c\in C$ and an infinite sequence $(n_i)_{i=1}^\infty$ such that
\[\{\sum_{i\in F_1}n_i,\sum_{i\in F_2}n_i, \ldots ,\sum_{i\in F_k}n_i\}\in \varphi^{-1}(c)\] for each $F_1<F_2<\cdots <F_k$ with $F_i\in \makebox{Fin}(\nats^+)$, $1\leq i \leq k$.
\end{thm}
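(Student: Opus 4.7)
The plan is to prove the Milliken--Taylor theorem by induction on $k$, with Hindman's Finite Sums Theorem (item (2) of Theorem~\ref{Hin}) serving as the base case $k=1$: there, $\Sigma_1(\nats^+)$ is simply $\nats^+$, and a monochromatic FS-set of color $c$ is exactly the conclusion.

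For the inductive step from $k$ to $k+1$, assume the theorem holds for $k$ and let $\varphi:\Sigma_{k+1}(\nats^+)\to C$ be a finite coloring. The idea is to control, for each potential \emph{smallest sum} $\sum_{i\in F_1}n_i$, the color of the remaining $k$ sums via the inductive hypothesis, and then apply Hindman's theorem once more to collapse the residual dependence on $F_1$. Concretely, I would construct the sequence $(n_i)_{i=1}^\infty$ together with a decreasing chain of IP sets $A_1\supseteq A_2\supseteq\cdots$, at each stage $j$ picking $n_{j+1}\in A_{j+1}$ with $\min A_{j+1}>\sum_{i\le j}n_i$, and arranging $(n_i)$ to grow fast enough that $F_1\mapsto \sum_{i\in F_1}n_i$ is injective on $\makebox{Fin}(\nats^+)$. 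The defining requirement is that for every non-empty $F_1\subseteq\{1,\ldots,j\}$, the induced $\Sigma_k$-coloring
\[
\psi_{F_1}\bigl(\{s_2<\cdots<s_{k+1}\}\bigr)=\varphi\bigl(\{\textstyle\sum_{i\in F_1}n_i,\,s_2,\ldots,s_{k+1}\}\bigr)
\]
should be constant, with some value $c_{F_1}\in C$, on $k$-subsets drawn from FS-sums built inside $A_{j+1}$.

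This requirement is secured by applying the inductive hypothesis to each $\psi_{F_1}$ in turn (viewed as a coloring of $\Sigma_k$ of the FS-generator of $A_j$), producing a sub-IP set of $A_j$ where $\psi_{F_1}$ is monochromatic on $k$-subsets; since at stage $j$ there are only $2^j-1$ choices of $F_1$, we may thin finitely many times and, via partition regularity of IP sets, pass to a common $A_{j+1}$ on which all $\psi_{F_1}$ are simultaneously monochromatic. After the full construction, the map $\Phi:\makebox{FS}((n_i))\to C$ defined by $\sum_{i\in F_1}n_i\mapsto c_{F_1}$ is well-defined by injectivity and is a finite coloring of an IP set; a final invocation of Hindman's theorem delivers a sub-sequence $(n'_i)$ and a color $c$ on which $\Phi\equiv c$. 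The pair $((n'_i),c)$ is then the sequence and color demanded by the theorem.

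The principal obstacle is keeping the construction coherent across stages: each stage $j$ introduces exponentially many new colorings $\psi_{F_1}$ that must be rendered simultaneously monochromatic on a common sub-IP set of $A_j$. This requires iterated use of the inductive hypothesis together with the partition regularity of IP sets mentioned just before the statement. Once that iterative thinning is set up correctly, the final Hindman step cleanly collapses the residual dependence on $F_1$ and yields the conclusion.
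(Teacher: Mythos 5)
The paper offers no proof of Theorem~\ref{thm:MT}: it is imported as a known result of Milliken and Taylor \cite{Mi,Ta} and used as a black box in the second proof of Theorem~\ref{SIF}, so there is no in-paper argument to measure yours against. Judged on its own, your sketch is essentially the classical combinatorial (``fusion'') proof of the Milliken--Taylor theorem: induction on $k$ with Hindman's theorem as the base case, a nested chain of sum subsystems absorbing the finitely many colorings $\psi_{F_1}$ created at each stage, and a final application of Hindman's theorem to kill the residual dependence on $F_1$. The outline is sound, but two points need tightening before it is a proof. First, the inductive hypothesis does not make $\psi_{F_1}$ constant on arbitrary $k$-element subsets of an FS-set; it only controls $k$-tuples of sums over blocks $F_2<\cdots<F_{k+1}$ of the generating sequence. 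That weaker conclusion is all you can get and, fortunately, all you need, but then the final verification must actually check that the tail sums $\sum_{i\in G_2}n_i,\ldots,\sum_{i\in G_{k+1}}n_i$ are ordered block sums of the generators of $A_{\max F_1+1}$; this forces the fusion to maintain that $(n_i)_{i>j}$ is a sum subsystem of the generating sequence of $A_{j+1}$ for every $j$, not merely that each individual $n_i$ lies in $A_{j+1}$. Second, the simultaneous refinement at stage $j$ is obtained by iterating the inductive hypothesis $2^j-1$ times, each application passing to a further sum subsystem; partition regularity of IP sets is not the right tool for intersecting finitely many monochromaticity requirements (its natural role is rather in the final step, to run Hindman inside the IP set $\makebox{FS}((n_i))$). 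With those repairs your argument is the standard one and is correct.
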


The next theorem shows that for each finite coloring $\varphi: \A^+\rightarrow C,$  and each periodic word $x\in \A^\omega$  there exists a shift invariant $\varphi$-ultra monochromatic factorization of $x.$ We present two proofs, one uses the fact that IP sets are partition regular and the other uses the Milliken-Taylor Theorem.

\begin{thm}\label{SIF} 
For each finite coloring $\varphi: \A^+\rightarrow C,$  each periodic word $x\in \A^\omega$  admits a shift invariant $\varphi$-ultra monochromatic factorization.
\end{thm}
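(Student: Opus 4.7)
The plan is to reduce the problem to a simultaneous application of Hindman's Finite Sums Theorem to the conjugates of the period of $x$.

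Write $x = u^\omega$ with $p = |u|$, and let $u^{(0)} = u, u^{(1)}, \ldots, u^{(p-1)}$ be the $p$ conjugates of $u$, so that $T^j(x) = (u^{(j \bmod p)})^\omega$. I would restrict attention to factorizations $x = V_0 V_1 V_2 \cdots$ in which each block is a power of $u$, i.e.\ $V_k = u^{m_k}$ for some $m_k \in \nats^+$. This constraint has two crucial consequences. First, every permuted concatenation satisfies
\[
V_{n_{\sigma(1)}} V_{n_{\sigma(2)}} \cdots V_{n_{\sigma(k)}} \;=\; u^{\sum_{i=1}^{k} m_{n_i}},
\]
so its $\varphi$-color is independent of $\sigma$ and depends only on the sum of the exponents. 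Second, since every partial sum $|V_0| + \cdots + |V_i|$ is a multiple of $p$, for each shift $j \geq 1$ with $j \equiv r \pmod p$ the induced block $W_i$ in $T^j(x) = W_0 W_1 \cdots$ is exactly $(u^{(r)})^{m_i}$, so every permuted concatenation of the $W$'s equals $(u^{(r)})^{\sum_i m_{n_i}}$, a pure power of a single conjugate of $u$.

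Shift-invariant $\varphi$-ultra monochromaticity thus collapses to the purely arithmetic requirement: find an infinite sequence $(m_k)_{k=0}^\infty$ of positive integers and colors $c_0, c_1, \ldots, c_{p-1} \in C$ such that
\[
\varphi\bigl((u^{(r)})^{\sum_{i \in F} m_i}\bigr) = c_r
\]
for every nonempty $F \in \mathrm{Fin}(\nats)$ and every $0 \leq r < p$. I would produce this by applying the Finite Sums Theorem (Theorem \ref{Hin}, item (2)) to the product coloring $\Psi \colon \nats^+ \to C^p$ defined by $\Psi(n) = (\varphi((u^{(r)})^n))_{r=0}^{p-1}$; a monochromatic FS-sequence for $\Psi$ with color $(c_0, \ldots, c_{p-1})$ is precisely the data we want, and setting $V_k = u^{m_k}$ completes the construction.

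The two proofs promised in the statement correspond to two equivalent packagings of this Ramsey input. One can iterate Hindman's theorem $p$ times by exploiting the partition regularity of IP sets, at each step passing to an IP subset on which one more of the $p$ colorings $n \mapsto \varphi((u^{(r)})^n)$ becomes constant, until all $p$ are simultaneously constant; alternatively, one can apply Theorem \ref{thm:MT}. The main conceptual point to check, and what I expect to be the only genuine obstacle, is the divisibility bookkeeping that ensures block alignment under both permutation and shift once block lengths are forced to be multiples of $p$; once this is verified, the Ramsey-theoretic input does all the remaining work.
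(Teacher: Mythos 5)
Your proof is correct. The reduction is the same as the paper's: restrict to blocks $V_k=u^{m_k}$ that are powers of the period, observe that permuted concatenations then depend only on $\sum_{i}m_{n_i}$, and that because all partial sums of block lengths are multiples of $p$ the $j$-th shifted factorization consists of powers of the conjugate $u^{(j\bmod p)}$, so everything collapses to finding a single FS-set on which all $p$ colorings $n\mapsto\varphi\bigl((u^{(r)})^n\bigr)$ are simultaneously constant. Where you diverge is in how you obtain that FS-set: you apply Hindman's theorem once to the product coloring $\Psi:\nats^+\to C^p$, whereas the paper's first proof iterates, applying Hindman to get an IP set on which the $r=0$ coloring is constant and then invoking partition regularity of IP sets $p-1$ more times to refine it for each successive conjugate, and the paper's second proof instead derives the statement from the Milliken--Taylor theorem on $\Sigma_k(\nats^+)$ (which is strictly stronger input than needed for your formulation, since your target color tuple is attached to a single sum rather than to a $k$-tuple of separated sums). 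Your product-coloring argument is the most economical of the three: it needs only Hindman's theorem itself, with no appeal to partition regularity of IP sets or to Milliken--Taylor. What the paper's packagings buy is the explicit connection to those two further pieces of Ramsey theory, which is part of the expository point of the section; mathematically your version is a clean simplification, and the ``divisibility bookkeeping'' you flag as the only potential obstacle does check out exactly as you describe.
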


\begin{proof} ({\em First proof}) Let $\varphi: \A^+\rightarrow C$  be given. Let $u=u_1u_2\cdots u_k\in \A^+$, $u_i \in \A$, $i=1, \ldots, k$ and $x=u^\omega.$
Consider the coloring $\varphi_1: \nats^+\rightarrow C$ defined by $\varphi_1(n)=\varphi(u^n).$ Then by Hindman's theorem there exists an infinite sequence $(n^{(1)}_i)_{i=1}^\infty$ and $c_1\in C$ such that $\makebox{FS}((n^{(1)}_i)_{i=1}^\infty)\subseteq \varphi_1^{-1}(c_1).$ This implies that the factorization \[x=u^{n^{(1)}_1}\cdot u^{n^{(1)}_2}\cdot u^{n^{(1)}_3}\cdots\] is $\varphi$-ultra monochromatic. Next consider the coloring $\varphi_2:\makebox{FS}((n^{(1)}_i)_{i=1}^\infty) \rightarrow C$ defined by $\varphi_2(n)=\varphi((u_2\cdots u_ku_1)^n).$ By partition regularity of IP sets it follows that there exists an infinite sequence $(n^{(2)}_i)_{i=1}^\infty$ and $c_2\in C$ such that $\makebox{FS}((n^{(2)}_i)_{i=1}^\infty)\subseteq \varphi_2^{-1}(c_2).$ It follows that the factorizations \[x=u^{n^{(2)}_1}\cdot u^{n^{(2)}_2}\cdot u^{n^{(2)}_3}\cdots\] and \[T(x)=(u_2\cdots u_ku_1)^{n^{(2)}_1} (u_2\cdots u_ku_1)^{n^{(2)}_2} (u_2\cdots u_ku_1)^{n^{(2)}_3}\cdots\] are both 
 $\varphi$-ultra monochromatic.  Continuing in this way up to stage $k,$ we can find an infinite sequence
$(n^{(k)}_i)_{i=1}^\infty$  such that for each $0\leq i\leq k-1$ the factorization
\[T^i(x)=(u_{i+1}\cdots u_ku_1\cdots u_{i})^{n^{(k)}_1}(u_{i+1}\cdots u_ku_1\cdots u_{i})^{n^{(k)}_2}(u_{i+1}\cdots u_ku_1\cdots u_{i})^{n^{(k)}_3}\cdots\]
is $\varphi$-ultra monochromatic.  Since $T^k(x)=x$ the result now follows.

\vspace{2 mm}

\noindent({\em Second proof}) As before let $\varphi: \A^+\rightarrow C$  be given, $u=u_1u_2\cdots u_k\in \A^+$, $u_i \in \A$, $i=1, \ldots, k$, and $x=u^\omega.$ Then $\varphi$ induces a finite coloring
\[\Psi: \Sigma_k(\nats^+)\rightarrow C^k\] defined by
\[\Psi(\{n_1<n_2<\cdots<n_k\})=(\varphi((u_1u_2\cdots u_k)^{n_1}), \varphi((u_2u_3\cdots u_ku_1)^{n_2}), \ldots ,\varphi((u_ku_1\cdots u_{k-1})^{n_k}).\] 
By Theorem~\ref{thm:MT} there exist $c=(c_1,c_2,\ldots ,c_k)\in C^k$ and $(n_i)_{i=1}^\infty$ such that 
\begin{equation}\Psi(\{\sum_{i\in F_1}n_i,\sum_{i\in F_2}n_i, \ldots ,\sum_{i\in F_k}n_i\})=c\tag{$*$}\end{equation} for each $F_1<F_2<\cdots <F_k$ with $F_i\in \makebox{Fin}(\nats^+)$, $1\leq i \leq k$.

Fix $1\leq j\leq k$ and $F\in \makebox{Fin}(\{k, k+1, k+2,\ldots\}).$ We claim that \[\varphi((u_j\cdots u_ku_1\cdots u_{j-1})^{\sum_{i\in F}n_i})=c_j.\]
This is a consequence of $(*)$ by taking $F_i=\{i\}$ for $1\leq i<j,$ $F_j=F,$ and $F_{j+i}=\{M+i\}$ for $1\leq i\leq k-j$ where $M=\max(F).$  
It follows that the factorization
$x=u^{n_k}u^{n_{k+1}}u^{n_{k+2}}\cdots$ is shift invariant $\varphi$-ultra monochromatic.
\end{proof}

\vspace{2 mm}

 \noindent
{\bf Acknowledgments:} We  are  indebted to Neil Hindman for his suggestions and we thank the two anonymous reviewers for their  useful comments.

\bibliographystyle{plain}
\bibliography{RSZshort}

\end{document}